\documentclass[reqno,3p,12pt]{elsarticle}
\usepackage[latin1]{inputenc}
\usepackage{amsmath}
\usepackage{amsthm}
\usepackage{amsfonts}
\usepackage{amssymb}
\usepackage{enumerate}

\newtheorem{thm}{Theorem}
\newtheorem{prop}[thm]{Proposition}
\newtheorem{cor}[thm]{Corollary}

\DeclareMathOperator{\charak}{char}
\DeclareMathOperator{\lcm}{lcm}
\DeclareMathOperator{\im}{im}
\DeclareMathOperator{\res}{res}
\DeclareMathOperator{\Hom}{Hom}
\DeclareMathOperator{\Gal}{Gal}
\DeclareMathOperator{\Br}{Br}
\DeclareMathOperator{\ind}{ind}

\newcommand{\ndiv}{\nmid}
\newcommand{\set}[1]{\{#1\}}
\newcommand{\resm}[2]{\res_{#1\to #2}}
\newcommand{\wt}{\widetilde}
\newcommand{\auss}[1]{{`#1'}}
\newcommand{\ovl}[1]{\overline{#1}}
\newcommand{\field}[1]{\mathbb{#1}}
\newcommand{\Q}{\field{Q}}
\newcommand{\N}{\field{N}}
\newcommand{\Z}{\field{Z}}
\renewcommand{\P}{\field{P}}

\begin{document}
\begin{frontmatter}
\title{Galois subfields of inertially split division algebras}
\author{Timo Hanke}
\address{
Lehrstuhl D für Mathematik\\
RWTH Aachen\\
Templergraben 64\\
D-52062 Aachen\\
Germany
}
\ead{hanke@math.rwth-aachen.de}
\date{\today}
\begin{keyword}
noncommutative valuation, division algebra, maximal subfield, Galois subfield, residue field, crossed product, noncrossed product, generic construction
\MSC 
Primary
16K20; 
Secondary
16S35 
\end{keyword}
\begin{abstract}
Let $D$ be a valued division algebra, finite-dimensional over its center $F$.
Assume $D$ has an unramified splitting field.
The paper shows that if $D$ contains a maximal subfield which is Galois over $F$
(i.e.\ $D$ is a crossed product) then the residue division algebra $\ovl D$
contains a maximal subfield which is Galois over the residue field $\ovl F$.
This theorem captures an essential argument of previously known noncrossed product proofs
in the more general language of noncommutative valuations.
The result is particularly useful in connection with explicit constructions.
\end{abstract}

\end{frontmatter}

\begin{quotation}
  \small NOTICE: this is the author's version of a work that was accepted for
publication in Journal of Algebra. Changes resulting from the publishing
process, such as peer review, editing, corrections, structural formatting, and
other quality control mechanisms may not be reflected in this document. Changes
may have been made to this work since it was submitted for publication. 
A definitive version will be subsequently published in Journal of Algebra (2011), doi:10.1016/j.jalgebra.2011.08.019.
\end{quotation}
\section{Introduction}

Let $(F,v)$ be an arbitrary valued field.
By a valuation $v$ on $F$ we mean a Krull valuation,
i.e.\ there is no restriction on the rank or divisibility of the value group.
Let $D$ be a finite-dimensional central $F$-division algebra.
Assume $v$ extends to a valuation on $D$
and denote this extension also by~$v$.
By a valuation on $D$ we mean a valuation in the sense of Schilling \cite{schilling:th-of-val}, i.e.\ $v$ corresponds to an invariant valuation ring of $D$. 
We call $(D,v)$ {\em inertially split} if there exists an unramified field extension $L/F$ such that $L$ splits $D$.\footnote{
In other words, $D$ is inertially split if and only if $D$ is split by the maximal unramified extension $F^h_{nr}$ of the Henselization $F^h$ of $(F,v)$.}
Let $\ovl F, \ovl D$ denote the residue field of $(F,v)$ and the residue division algebra of $(D,v)$, respectively.
A {\em maximal subfield} of $D$ means a commutative subfield of $D$ 
which is maximal with respect to inclusion.

\begin{thm}\label{thm:main}
Let $(D,v)$ be inertially split, $F=Z(D)$.
If $D$ contains a maximal subfield Galois over $F$
then 
$\ovl D$ contains a maximal subfield Galois over $\ovl F$.
The converse holds if $v$ is Henselian.
\end{thm}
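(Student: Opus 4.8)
\emph{Strategy and reduction.} For the implication ``$\Rightarrow$'' I would first pass to the Henselization: since ``inertially split'' refers to $F^h$ by definition, and since $D^h:=D\otimes_FF^h$ is a division algebra carrying a valuation extending $v$ with the same residue data $\ovl D/\ovl F$, it suffices to treat $D^h$. A Galois maximal subfield $K/F$ of $D$ yields one of $D^h$: the valuation ring $O_D$ is invariant, so by Skolem--Noether each $\sigma\in\Gal(K/F)$ is conjugation by some $d_\sigma\in D^*$, whence $\sigma(O_K)=d_\sigma O_Kd_\sigma^{-1}\subseteq O_D\cap K=O_K$; thus $v$ has a unique extension to $K$, so $K\otimes_FF^h$ is a field, Galois over $F^h$ of degree $n:=\deg D$. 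The same uniqueness makes $\Gal(K/F)$ stabilise $O_K$, so $\ovl K/\ovl F$ is normal, and --- invoking that an inertially split $D$ is defectless with separable residue extensions in its subfields --- $\ovl K/\ovl F$ is Galois. From now on $F$ is Henselian, and I use the Jacob--Wadsworth structure of inertially split algebras: $\ovl D$ is a division algebra, $Z:=Z(\ovl D)$ is abelian Galois over $\ovl F$ with $[Z:\ovl F]=|\Gamma_D/\Gamma_F|$ and $\deg(\ovl D/Z)\cdot[Z:\ovl F]=n$, and there is a canonical isomorphism $\Gamma_D/\Gamma_F\xrightarrow{\ \sim\ }\Gal(Z/\ovl F)$, $\gamma\mapsto\sigma_\gamma$, with $\sigma_\gamma$ induced on residues by conjugation by any $u\in D^*$ of value $\gamma$.

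\emph{Proof of ``$\Rightarrow$''.} The candidate maximal subfield of $\ovl D$ is $M:=\ovl K\cdot Z$. It is a commutative domain inside $\ovl D$, hence a field, and it is Galois over $\ovl F$ as the compositum of the Galois extensions $\ovl K/\ovl F$ and $Z/\ovl F$. It remains to check $[M:\ovl F]=n$. Since $K/F$ is defectless, $[\ovl K:\ovl F]=n/|\Gamma_K/\Gamma_F|$, so
\[
[M:\ovl F]=\frac{[\ovl K:\ovl F]\,[Z:\ovl F]}{[\ovl K\cap Z:\ovl F]}=\frac{n\,|\Gamma_D/\Gamma_K|}{[\ovl K\cap Z:\ovl F]},
\]
and one has to show $[\ovl K\cap Z:\ovl F]=|\Gamma_D/\Gamma_K|$. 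For ``$\le$'': $\Gamma_K/\Gamma_F$ maps under $\gamma\mapsto\sigma_\gamma$ onto a subgroup $H\le\Gal(Z/\ovl F)$ with $|H|=|\Gamma_K/\Gamma_F|$; for $\gamma\in\Gamma_K$ the implementing element $u$ may be chosen in $K^*$, and it then centralises $K$, hence fixes $\ovl K$, hence $\ovl K\cap Z\subseteq Z^H$ and $[\ovl K\cap Z:\ovl F]\le[Z^H:\ovl F]=|\Gamma_D/\Gamma_K|$. For ``$\ge$'': $M\supseteq Z$ is a subfield of the central division algebra $\ovl D$, so $[M:Z]\le\deg(\ovl D/Z)$ and $[M:\ovl F]\le\deg(\ovl D/Z)\,[Z:\ovl F]=n$, which with the displayed equation forces $[\ovl K\cap Z:\ovl F]\ge|\Gamma_D/\Gamma_K|$. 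Hence $[M:\ovl F]=n$, so $M$ is a maximal subfield of $\ovl D$, Galois over $\ovl F$.

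\emph{Proof of the converse ($v$ Henselian).} Let $M\subseteq\ovl D$ be a maximal subfield Galois over $\ovl F$; then $M\supseteq Z$. Let $E/F$ be the unramified extension with $\ovl E\cong M$; since $F$ is Henselian, $E/F$ is Galois and $[E:F]=[M:\ovl F]=n=\ind D$. I claim $E$ splits $D$. In the Jacob--Wadsworth decomposition $\operatorname{ISBr}(F)\cong\Br(\ovl F)\times\Hom_{\mathrm{cont}}(G_{\ovl F},\Gamma_F\otimes\Q/\Z)$, the character $\chi_D$ attached to $[D]$ has fixed field $Z$, so $\chi_D|_{G_M}=0$ because $M\supseteq Z$; and the $\Br(\ovl F)$-component, represented by the residue $\ovl{D_0}$ of the inertial part of $D$, is killed by $M$, since $\ovl D$ is Brauer equivalent over $Z$ to $\ovl{D_0}\otimes_{\ovl F}Z$, which $M$ (a maximal subfield of $\ovl D$) splits, whence $\ovl{D_0}\otimes_{\ovl F}M=(\ovl{D_0}\otimes_{\ovl F}Z)\otimes_ZM$ is split. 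Therefore $[D\otimes_FE]=0$, and since $[E:F]=\ind D$ the field $E$ is $F$-isomorphic to a maximal subfield of $D$, which is Galois over $F$.

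\emph{Main obstacle.} The crux is the bookkeeping identity $[\ovl K\cap Z(\ovl D):\ovl F]=|\Gamma_D/\Gamma_K|$, i.e.\ that $\ovl K\cdot Z(\ovl D)$ is \emph{exactly} a maximal subfield of $\ovl D$ and not a proper subfield; this is where the canonical pairing $\Gamma_D/\Gamma_F\cong\Gal(Z(\ovl D)/\ovl F)$ and the inertially-split structure are genuinely used. A secondary but real technical point is the separability of $\ovl K/\ovl F$ (needed to upgrade ``normal'' to ``Galois''), which must be extracted from the inertially-split hypothesis --- a separable extension of valued fields need not have separable residue extension. For the converse the only input beyond routine valuation theory is the Jacob--Wadsworth classification of $\operatorname{ISBr}(F)$ together with the description of the residue of an inertially split algebra.
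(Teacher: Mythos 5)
Your forward direction has a genuine gap at the step where you upgrade $\ovl K/\ovl F$ from normal to Galois by ``invoking that an inertially split $D$ is defectless with separable residue extensions in its subfields.'' Defectlessness of $K$ is fine, but the separability of $\ovl K/\ovl F$ is false in general, and it is precisely the point the paper's proof is organized around. Concretely: let $k=\mathbb{F}_p(a,b)$, let $E=[a,b)$ be the degree-$p$ cyclic $p$-algebra generated by $x,y$ with $x^p-x=a$, $y^p=b$, $yxy^{-1}=x+1$, let $F=k((t))$ and $D=E\otimes_k F$, an inertial (hence inertially split) division algebra with $\ovl D=E$, $Z(\ovl D)=\ovl F=k$. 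The Artin--Schreier extension $K=F(\theta)$ with $\theta^p-\theta=b\,t^{-p}$ is cyclic of degree $p$ over $F$; setting $u=t\theta$ one finds $u^p-t^{p-1}u=b$, so $v(u)=0$, $\ovl K=k(b^{1/p})$, and $K/F$ is unramified in the na\"ive sense ($e=1$, defectless) but with purely inseparable residue extension. Since $[a,b)_K$ is inertial over $K$ with split residue $[a,b)_{k(b^{1/p})}$, the field $K$ splits $D$ and hence embeds as a Galois maximal subfield of $D$. For this $K$ your candidate $M=\ovl K\cdot Z(\ovl D)=k(b^{1/p})$ is a maximal subfield of $\ovl D$ that is normal but \emph{not} Galois over $\ovl F$, so your argument terminates without producing the required field (note $\ovl D=E$ does contain the Galois maximal subfield $k(\wp^{-1}(a))$, consistent with the theorem --- it just is not your $M$).

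What the paper does instead is to prove exactly what your computation legitimately delivers --- that $\ovl L\,Z(\ovl D)$ is a maximal subfield of $\ovl D$ which is \emph{normal} over $\ovl F$ (maximality from \cite[Thm.~5.15]{jacob-wadsworth:div-alg-hensel-field}, normality of $\ovl L/\ovl F$ from \cite[p.~107]{endler:val-th}) --- and then to invoke Proposition~\ref{prop:galsf}, a strengthening of Saltman's Lemma~3 on $p$-algebras, which converts a maximal subfield normal over $\ovl F$ into one Galois over $\ovl F$. That $p$-algebra step is the missing idea in your write-up; it cannot be replaced by a separability claim, and in residue characteristic $0$ (where your proof is fine) the theorem is comparatively easy. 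Your reduction to the Henselian case matches the paper's (Morandi's Henselization), and your proof of the converse is correct and is a genuine variant: you verify that the inertial lift $E$ of $M$ splits $D$ via the Witt--Scharlau decomposition, whereas the paper simply produces the inertial lift inside $D$ by \cite[Thm.~2.9]{jacob-wadsworth:div-alg-hensel-field} and counts degrees. But as written the forward implication is not proved when $\charak\ovl F=p>0$.
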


A division algebra $D$ is called a {\em crossed product} if it contains a maximal subfield Galois over its center, otherwise a {\em noncrossed product}.
The existence as well as the construction of noncrossed products has spawned research interest from around 1930, when the question of existence first arose, to the present day.
Since Amitsur's existence proof in 1972 
more examples were found or constructed by various authors and by various methods.
In this context, 
Theorem \ref{thm:main} is regarded as a noncrossed product criterion for valued division algebras
which is formulated purely ``on the residue level''.
Note that the condition \auss{$\ovl D$ does not contain a maximal subfield Galois over $\ovl F$}
is weaker than \auss{$\ovl D$ is a noncrossed product}
because $\ovl F$ is in general a strict subfield of the center of $\ovl D$.

Valuations on division rings have played a crucial role in works on noncrossed products 
(explicitly or implicitly, as \cite[\S5]{wadsworth:survey} points out).
So it comes as no surprise that special cases of Theorem~\ref{thm:main} can be identified as a key ingredient in previously known noncrossed product proofs 
(even where valuations on division rings are not explicitly mentioned),
starting with Brussel \cite{brussel:noncr-prod}.
In fact, it was the motivation of the present paper to provide an abstraction of this useful argument 
in the language of noncommutative valuations, which seems to capture its essence best.
The precise relation of Theorem \ref{thm:main} to the relevant literature is discussed in several remarks.
We also point out its usefulness in connection with explicit noncrossed product constructions.

For the most part the paper contains condensed material from the authors thesis \cite{hanke:thesis},
described in its historical context up to recent work in \cite{hanke-sonn:location}.

\section{The main theorem}

Let $D$ be a division algebra, finite-dimensional over its center $Z(D)$.
Jacob and Wadsworth show in \cite[Thm.~5.15(b)]{jacob-wadsworth:div-alg-hensel-field} that 
if $D$ is inertially split\footnote{We may assume the valuation to be Henselian by passing to the Henselization of $D$ in the sense of \cite{morandi:henselization-div-alg}.} and
contains a maximal subfield $L$ Galois over $Z(D)$
then $\ovl D$ contains the maximal subfield $\ovl LZ(\ovl D)$,
which is normal over $\ovl{Z(D)}$.
They conclude further that $\ovl D$ is a crossed product using a 
result of Saltman, \cite[Lem.~3]{saltman:noncr-prod-small-exp}. 
This section points out that replacing \cite[Lem.~3]{saltman:noncr-prod-small-exp} by a slightly stronger statement (see Proposition \ref{prop:galsf} below) leads to Theorem \ref{thm:main}.
In the preface of \cite{hanke:thesis} Theorem \ref{thm:main} appears as ``Noncrossed Product Criterion'', and in the text as Theorem 5.20.

\begin{prop}\label{prop:galsf}
Let $F\subseteq Z(D)$ be a subfield such that $Z(D)/F$ is finite and separable.
If $D$ contains a maximal subfield that is normal over $F$
then $D$ also contains a maximal subfield that is Galois over $F$.
\end{prop}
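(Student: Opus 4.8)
The plan is to strip the separable part off $M$, reduce to a $p$-algebra by passing to a centralizer, invoke Albert's theory of $p$-algebras to produce a separable maximal subfield there, and carry out all of this compatibly with the relevant Galois action so that normality over $F$ --- not merely over $Z(D)$ --- is preserved. Write $K=Z(D)$ and $n=\deg D$, so the given maximal subfield $M$ satisfies $[M:K]=n$ and $[M:F]=[K:F]\,n$. If $\charak F=0$ there is nothing to do, since $M/F$ is then automatically separable and normal implies Galois; so assume $\charak F=p>0$. Let $S$ be the separable closure of $F$ in $M$. Since $K/F$ is separable, $K\subseteq S$; since $M/F$ is normal, $S/F$ is Galois, say with group $G:=\Gal(S/F)$, and $M/S$ is purely inseparable. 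Because $M/S$ is purely inseparable while $M/F$ is normal, every $g\in G$ extends \emph{uniquely} to an automorphism $\widehat g$ of $M$ fixing $F$, so $G$ acts faithfully on $M$ with fixed field the purely inseparable closure of $F$ in $M$.

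Next I would pass to the centralizer $D_0:=C_D(S)$. By the double centralizer theorem $D_0$ is a central division algebra over $S$ with $\deg D_0=n/[S:K]=[M:S]=:p^e$, and $M$ is a maximal subfield of $D_0$, purely inseparable over $S$. This reduction loses nothing, by a degree count: if $N_0\subseteq D_0$ is a maximal subfield of $D_0$ with $N_0/F$ Galois, then $[N_0:K]=[N_0:S]\,[S:K]=p^e[S:K]=n$, so $N_0$ is already a maximal subfield of $D$, Galois over $F$. Hence it suffices to produce such an $N_0$ inside $D_0$.

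Now $D_0$ is a $p$-algebra --- it is split by the purely inseparable extension $M/S$ --- and it carries a purely inseparable maximal subfield. By Albert's theory of $p$-algebras (peeling off cyclic factors of degree $p$ by induction on $e$, each contributing an Artin--Schreier, hence separable, generator, and using the purely inseparable maximal subfield to keep the induction running), $D_0$ contains a maximal subfield $T/S$ which is abelian, in particular separable, hence already Galois over $S$. The catch is that a tower $F\subseteq S\subseteq T$ of Galois extensions need not be normal, so one cannot simply accept the $T$ handed over by the structure theory. This is where normality of $M$ over $F$, rather than merely over $K$, does its work: the purely inseparable splitting field $M$ of $D_0$ is stable under the automorphisms $\widehat g$, so the cyclic decomposition of $D_0$ can be carried out $G$-equivariantly (transporting the data by the $\widehat g$ and patching, the relevant obstructions are additive and vanish by the normal basis theorem, the additive group of $S$ being a free, hence cohomologically trivial, $F[G]$-module). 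The resulting maximal subfield $N_0$ of $D_0$ is then normal over $F$ by construction; being separable over $F$ as well (a tower of the separable extensions $S/F$ and $N_0/S$), $N_0/F$ is Galois, and by the degree count $N_0$ is the desired maximal subfield of $D$.

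I expect the real difficulty to lie in that last step: forcing Albert's cyclic decomposition of the $p$-algebra $D_0$ to respect $G=\Gal(S/F)$, equivalently arranging that the Witt-vector data cutting out a separable maximal subfield of $D_0$ can be chosen inside the $G$-equivariant structure, so that this subfield is normal --- hence Galois --- over $F$. The remainder --- the characteristic-zero case, the two reductions, and the degree bookkeeping --- is routine.
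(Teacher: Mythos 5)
Your reductions are correct, and they are exactly the frame of the proof the paper has in mind: the paper does not reprove this proposition but cites Saltman's Lemma~3 (the case $F=Z(D)$) and asserts that an inspection of that proof covers separable $Z(D)/F$; that proof proceeds precisely as you do, passing to the separable closure $S$ of $F$ in $M$ and to the centralizer $D_0=C_D(S)$, where one faces a $p$-algebra with a purely inseparable maximal subfield. The characteristic-zero case, the observation $Z(D)\subseteq S$, and the degree count showing that a maximal subfield of $D_0$ Galois over $F$ is automatically maximal in $D$ are all fine.

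The step you yourself flag as ``the real difficulty'' is, however, the entire content of the proposition, and what you write there is a declaration rather than an argument. Two concrete problems. First, ``carrying out the cyclic decomposition of $D_0$ $G$-equivariantly'' presupposes a $G$-action on $D_0$ by algebra automorphisms, and there is none: the $\widehat g$ are automorphisms of the field $M$ only, and for $g$ acting nontrivially on $Z(D)$ they cannot be induced by inner automorphisms of $D$ (which fix the center), so there is nothing for the decomposition to be equivariant with respect to. What is actually needed is a field-theoretic statement --- that among all abelian extensions $T/S$ of degree $p^{e}$ which split $D_0$ (equivalently, embed as maximal subfields) there is one with $T/F$ Galois --- extracted from the Artin--Schreier--Witt description of the cyclic factors of $D_0$ supplied by Albert's theorem that a $p$-algebra with a purely inseparable maximal subfield is a tensor product of cyclic symbol algebras $[\beta_i,a_i)$. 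Second, ``the obstructions are additive and vanish by the normal basis theorem'' names the cohomological input ($H^{i}(G,W_e(S))=0$) without ever exhibiting a cocycle: you would have to choose the inseparability data $a_i\in S$ compatibly with $G$ (itself not obviously possible), track how $G$ moves the Witt vectors $\beta_i$ modulo $\wp$, and verify that adjusting them to achieve normality of $T/F$ does not change the Brauer class, hence preserves the embedding into $D_0$. Until that is done, the proposal amounts to the correct reduction plus the assertion that Saltman's lemma holds; to complete it you should either supply this argument or cite Saltman's Lemma~3 (and the appendix of the author's thesis for the relative case) explicitly, as the paper does.
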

The case $F=Z(D)$ is \cite[Lem.~3]{saltman:noncr-prod-small-exp}.
An inspection of the proof reveals that it also handles the case when $Z(D)/F$ is finite and separable.
Further details can be found in the appendix of \cite{hanke:thesis}.
Note that this is essentially a statement about $p$-algebras.

\begin{thm}\label{thm:main2}
Let $(D,v)$ be inertially split, $F\subseteq Z(D)$ a finite degree subfield.
Suppose $Z(D)/F$ and $Z(\ovl D)/\ovl F$ are Galois.
If $D$ contains a maximal subfield which is Galois over $F$
then $\ovl D$ contains a maximal subfield which is Galois over $\ovl F$.
The converse holds if $v$ is Henselian.
\end{thm}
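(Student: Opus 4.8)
The plan is to treat the two implications separately. For the forward implication I would first reduce to the Henselian case: replacing $(D,v)$ by Morandi's Henselization $(D^h,v^h)$ \cite{morandi:henselization-div-alg} changes neither $\ovl D$, nor $\ovl F$, nor the property of being inertially split, and $Z(D^h)/F^h$ is again Galois (with group a decomposition subgroup of $\Gal(Z(D)/F)$), as is $Z(\ovl{D^h})/\ovl{F^h}=Z(\ovl D)/\ovl F$. If $K\subseteq D$ is a maximal subfield Galois over $F$, then, using $D^h=D\otimes_{Z(D)}Z(D^h)$ and the fact that $K$ splits $D$, the compositum $KZ(D^h)$ is a subfield of $D^h$ that splits $D^h$, hence a maximal subfield of $D^h$, and it coincides with the Henselization $K^h$, which is Galois over $F^h$. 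So it is enough to prove the forward implication assuming $v$ Henselian.

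Assume now $v$ Henselian, and let $K\subseteq D$ be a maximal subfield Galois over $F$. Since $F\subseteq Z(D)\subseteq K$, the extension $K/Z(D)$ is Galois, so \cite[Thm.~5.15(b)]{jacob-wadsworth:div-alg-hensel-field} provides the maximal subfield $M:=\ovl K\,Z(\ovl D)$ of $\ovl D$, which is normal over $\ovl{Z(D)}$. The next step is to improve this to \emph{normal over $\ovl F$}: since $(F,v)$ is Henselian and $K/F$ is normal, $\ovl K/\ovl F$ is normal; and $Z(\ovl D)/\ovl F$ is normal by hypothesis; hence the compositum $M=\ovl K\cdot Z(\ovl D)$ is normal over $\ovl F$. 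Now apply Proposition \ref{prop:galsf} to $\ovl D$ with the subfield $\ovl F\subseteq Z(\ovl D)$: here $Z(\ovl D)/\ovl F$ is finite and separable, being Galois, and $\ovl D$ contains the maximal subfield $M$ normal over $\ovl F$; therefore $\ovl D$ contains a maximal subfield Galois over $\ovl F$.

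For the converse, $v$ is Henselian by assumption; let $\ovl M\subseteq\ovl D$ be a maximal subfield Galois over $\ovl F$, so also over $Z(\ovl D)$ and over $\ovl{Z(D)}$, the last of which is in particular separable. Let $N/Z(D)$ be the inertial lift of $\ovl M/\ovl{Z(D)}$; it is Galois over $Z(D)$ (because $\ovl M/\ovl{Z(D)}$ is), and since $N=E\cdot Z(D)$ with $E/F$ the inertial lift of the separable extension $\ovl M/\ovl F$ (which is Galois over $F$ because $\ovl M/\ovl F$ is), $N$ is Galois over $F$. A degree count, using $\deg D=\deg\ovl D\cdot[Z(\ovl D):\ovl{Z(D)}]$ for inertially split $D$, gives $[N:Z(D)]=\deg D$. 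It remains to show that $N$ splits $D$: since $N/Z(D)$ is inertial with residue $\ovl M$, and $\ovl M$ both splits $\ovl D$ and contains $Z(\ovl D)$, the structure theory of inertially split division algebras over Henselian fields \cite{jacob-wadsworth:div-alg-hensel-field} yields that $D\otimes_{Z(D)}N$ is split. Hence $N$ embeds into $D$ as a maximal subfield, Galois over $F$.

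I expect the step ``$N$ splits $D$'' in the converse to be the main obstacle. It depends on the decomposition of the Brauer class of an inertially split algebra into a residue part, killed because $\ovl M$ splits $\ovl D$, and a ramification part supported on $Z(\ovl D)/\ovl{Z(D)}$, killed because $\ovl M\supseteq Z(\ovl D)$, and one must check that these assemble to show $D\otimes_{Z(D)}N$ is split. A secondary subtlety, already in the forward implication, is that $\ovl K/\ovl F$ can genuinely be inseparable even though $K/F$ is Galois (ferocious ramification), so the normal subfield $M$ need not be separable over $\ovl F$; this is exactly why Proposition \ref{prop:galsf} is invoked and why the standing hypothesis that $Z(\ovl D)/\ovl F$ be Galois --- and hence separable --- cannot be dropped.
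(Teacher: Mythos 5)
Your forward implication is essentially the paper's proof: reduce to the Henselian case via Morandi's Henselization, apply \cite[Thm.~5.15(b)]{jacob-wadsworth:div-alg-hensel-field} to get the maximal subfield $\ovl K\,Z(\ovl D)$, upgrade ``normal over $\ovl{Z(D)}$'' to ``normal over $\ovl F$'' using normality of $\ovl K/\ovl F$ together with the hypothesis on $Z(\ovl D)/\ovl F$, and finish with Proposition~\ref{prop:galsf}; your closing remark about possible inseparability of $\ovl K/\ovl F$ is exactly why that proposition (rather than Saltman's original lemma) is needed. In the converse you diverge: you build the inertial lift $N$ of $\ovl M$ \emph{externally} and then must prove that $N$ splits $D$ before you can embed it, and you correctly flag this as the open step. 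The paper sidesteps it entirely by invoking \cite[Thm.~2.9]{jacob-wadsworth:div-alg-hensel-field}, which asserts that $D$ itself already \emph{contains} an inertial lift $\wt L$ of the (separable) subfield $\ovl M$ of $\ovl D$, with $\wt L/F$ Galois; one then sets $M=Z(D)\wt L$ and the same degree count $[M:Z(D)]=[\ovl M:\ovl{Z(D)}]=\ind\ovl D\cdot[Z(\ovl D):\ovl{Z(D)}]=\ind D$ shows maximality, with no splitting argument required. Your sketched splitting argument is nevertheless sound and can be completed: in the decomposition $[D]=\alpha+\delta(\chi)$ of the Witt--Scharlau sequence \eqref{eq:seq}, restriction to the inertial extension $N$ kills $\delta(\chi)$ because $\ovl M\supseteq Z(\ovl D)=\ovl F(\chi)$ forces $\chi$ to restrict trivially, and kills $\alpha$ because $[\ovl D]=\alpha^\chi$ and $\ovl M$ splits $\ovl D$ --- this is precisely the computation behind Corollary~\ref{cor:brussel}. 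So your route works but costs you the machinery of the exact sequence, whereas the paper's route costs only the citation of the lifting theorem; the trade-off is that your version makes the ``residue part plus ramification part'' mechanism visible, which is the viewpoint the paper defers to its Section~3.
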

If $F=Z(D)$ then the hypothesis \auss{$Z(\ovl D)/\ovl F$ is Galois} is automatically satisfied
(\cite[Lem.~5.1]{jacob-wadsworth:div-alg-hensel-field}).
Thus, Theorem \ref{thm:main} is a special case of Theorem~\ref{thm:main2}.
The generality of Theorem~\ref{thm:main2} lends itself to an iterated application for composite valuations, 
which is possible but not pursued further in this paper.

\begin{proof}[Proof of Theorem \ref{thm:main2}]
Let $L$ be a maximal subfield of $D$ which is Galois over $F$.
Following the lines of proof of \cite[Thm.~5.15]{jacob-wadsworth:div-alg-hensel-field}, $\ovl L Z(\ovl D)$ is a maximal subfield of $\ovl D$.
Since $L/F$ is Galois, $\ovl L/\ovl F$ is normal (\cite[p.\ 107]{endler:val-th}).
Since $Z(\ovl D)/\ovl F$ is Galois by hypothesis, $\ovl L Z(\ovl D)/\ovl F$ is normal.
Proposition \ref{prop:galsf} shows that $\ovl D$ contains a maximal subfield Galois over $\ovl F$.

Conversely, let $L$ be a maximal subfield of $\ovl D$ which is Galois over $\ovl F$.
Assume $v$ is Henselian.
By \cite[Thm.~2.9]{jacob-wadsworth:div-alg-hensel-field},
$D$ contains an inertial lift $\wt L$ of $L$ over $F$
and $\wt L/F$ is Galois.
Hence $M:=Z(D)\wt L$ is Galois over $F$ and inertial over $Z(D)$.
Since $\ovl M=L$ we have $[M:Z(D)]=[L:\ovl{Z(D)}]=\ind D$ by the index formula in 
\cite[Thm.~5.15]{jacob-wadsworth:div-alg-hensel-field}.
Thus, $M$ is maximal in $D$. 
\end{proof}

\section{Existence of inertially split division algebras}

Assume $(F,v)$ is Henselian.
The inertially split division algebras over $F$ then form a subgroup of the Brauer group;
this subgroup is the relative Brauer group $\Br(F_{nr}/F)$,
where $F_{nr}$ is the maximal unramified extension of $F$.
Let $G=\Gal(\ovl F_{sep}/\ovl F)$, $\Gamma=v(F^*)$, $\Delta$ the divisible hull of $\Gamma$.
Any continuous homomorphism $\chi\in\Hom_c(G,\Delta/\Gamma)$ has a finite image when $\Delta/\Gamma$ is equipped with the discrete topology because $G$ is profinite.
Thus, denoting the fixed field of $\ker(\chi)$ by $\ovl F(\chi)$,
the extension $\ovl F(\chi)/\ovl F$ is finite abelian.
Moreover, $\Gal(\ovl F(\chi)/\ovl F)$ has rank
at most the rank of $v$.
Let $\res_{\ovl F\to\ovl F(\chi)}:\Br(\ovl F)\to\Br(\ovl F(\chi)),\alpha\mapsto\alpha^\chi$ denote the restriction map.
There is a (noncanonical) exact sequence, due to Witt~\cite{witt:schiefkoerper} and Scharlau~\cite{scharlau:br-henselkoerper}:
\begin{equation}\label{eq:seq}
 0\to\Br(\ovl F)\to\Br(F_{nr}/F)\stackrel{\gamma}{\to}\Hom_c(G,\Delta/\Gamma)\to 0.
\end{equation}
The interpretation of the cohomological data in this sequence has been extensively studied 
by Jacob and Wadsworth in \cite[p.\ 154ff]{jacob-wadsworth:div-alg-hensel-field}:
Let $(D,v)$ be inertially split with center $F$.
Then $Z(\ovl D)=\ovl F(\gamma[D])$ (\cite[Thm.~5.6(b)]{jacob-wadsworth:div-alg-hensel-field})
and $\ind D=\ind\ovl D\cdot[\ovl F(\gamma[D]):\ovl F]$
(\cite[Thm.~5.15(a)]{jacob-wadsworth:div-alg-hensel-field}).
There exists a splitting map $\delta$ for $\gamma$ such that
\begin{equation}\label{eq:delta}
\textrm{$\ovl D$ is a field for each $[D]\in\im(\delta)$}
\end{equation}
(\cite[Rem.~5.9(ii)]{jacob-wadsworth:div-alg-hensel-field}).
For any such $\delta$,
if $[D]=\alpha+\delta(\chi)$ with $\alpha\in\Br(\ovl F)$ and $\chi\in\Hom_c(G,\Delta/\Gamma)$ then
$[\ovl D]=\alpha^\chi$
(following the proof of \cite[Thm.~5.15(a)]{jacob-wadsworth:div-alg-hensel-field}). 
Proposition \ref{prop:ex} below is an immediate consequence of these facts.
Suppose $E$ is a finite-dimensional division algebra over $\ovl F$ (not necessarily central).

\begin{prop}\label{prop:ex}
$E$ is the residue of an $F$-central inertially split $(D,v)$
if and only if 
$Z(E)=\ovl F(\chi)$ for some $\chi\in\Hom_c(G,\Delta/\Gamma)$ 
and $[E]\in\im(\resm{\ovl F}{Z(E)})$.
\end{prop}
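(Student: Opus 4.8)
The plan is to deduce both directions directly from the Jacob--Wadsworth structure theory recalled above, after fixing once and for all a splitting map $\delta$ of $\gamma$ enjoying property \eqref{eq:delta}. The first observation is that, since $(F,v)$ is Henselian, a class in $\Br(F_{nr}/F)$ is the same datum as an $F$-central inertially split division algebra equipped with the unique valuation extending $v$; so the whole argument is bookkeeping inside the exact sequence \eqref{eq:seq}.

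For the \auss{only if} direction I would start from $E=\ovl D$ with $(D,v)$ inertially split and $Z(D)=F$, and put $\chi:=\gamma[D]\in\Hom_c(G,\Delta/\Gamma)$. Then $Z(E)=Z(\ovl D)=\ovl F(\gamma[D])=\ovl F(\chi)$ by \cite[Thm.~5.6(b)]{jacob-wadsworth:div-alg-hensel-field}. Applying exactness of \eqref{eq:seq} at the middle term to $[D]-\delta(\chi)$, which $\gamma$ sends to $\chi-\chi=0$, yields $\alpha\in\Br(\ovl F)$ with $[D]=\alpha+\delta(\chi)$; the displayed identity $[\ovl D]=\alpha^\chi$ then gives $[E]=\alpha^\chi=\resm{\ovl F}{\ovl F(\chi)}(\alpha)=\resm{\ovl F}{Z(E)}(\alpha)\in\im(\resm{\ovl F}{Z(E)})$.

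For the \auss{if} direction, given $Z(E)=\ovl F(\chi)$ for some $\chi$ and $[E]=\alpha^\chi$ for some $\alpha\in\Br(\ovl F)$, I would simply set $[D]:=\alpha+\delta(\chi)\in\Br(F_{nr}/F)$ and let $D$ be the corresponding $F$-central inertially split division algebra. Using $\gamma(\alpha)=0$ and $\gamma\delta=\mathrm{id}$ one gets $\gamma[D]=\chi$, hence $Z(\ovl D)=\ovl F(\chi)=Z(E)$ and $[\ovl D]=\alpha^\chi=[E]$. Therefore $\ovl D$ and $E$ are division algebras over the common field $Z(E)$ with the same Brauer class, so $\ovl D\cong E$ over $\ovl F$, i.e.\ $E$ is the residue of $(D,v)$.

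I do not expect a genuine obstacle, as the proposition is engineered to be an immediate corollary of the cited facts; the one step deserving an explicit sentence is the final one, namely that equal centers plus equal Brauer classes over that center force $\ovl D\cong E$ --- this is because a finite-dimensional division algebra is determined up to isomorphism by its class in the Brauer group of its center. If one prefers not to assume $(F,v)$ Henselian from the start, passing to the Henselization changes neither the residue algebra nor the hypotheses, so this too is harmless.
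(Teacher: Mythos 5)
Your proposal is correct and is precisely the argument the paper intends when it says the proposition ``is an immediate consequence of these facts'': you fix a splitting map $\delta$ satisfying \eqref{eq:delta}, use exactness of \eqref{eq:seq} to write $[D]=\alpha+\delta(\chi)$ with $\chi=\gamma[D]$, and invoke $Z(\ovl D)=\ovl F(\gamma[D])$ together with $[\ovl D]=\alpha^\chi$ in both directions. The only step you flag as needing care --- that equality of centers and of Brauer classes over that center forces $\ovl D\cong E$ --- is handled exactly as you say, so nothing is missing.
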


An $F$-central inertially split $(D,v)$ with residue division algebra $E$ will be called an {\em $F$-central lift of $E$}
($v$ is not required to be Henselian for this definition). 
Proposition \ref{prop:ex} appears in the preface of \cite{hanke:thesis} as ``Lift Theorem'',
and in the text as Theorem~5.25.
All $F$-central lifts of $E$ are obtained by taking, 
for a fixed $\delta$ satisfying \eqref{eq:delta},
the underlying division algebras of $\alpha+\delta(\chi)$ 
where $\chi$ runs over all $\chi\in\Hom_c(G,\Delta/\Gamma)$ with $\ovl F(\chi)=Z(E)$
and $\alpha$ runs over all $\alpha\in\Br(\ovl F)$
with $\alpha^\chi=[E]$.
There is, however, no canonical $F$-central lift because there is no canonical choice for $\alpha$.\footnote{
As a consequence, for instance, the exponent of an $F$-central lift of $E$ is in general not determined by $E$.
Note that $\delta$ can be chosen order preserving by \cite[Ex.~4.3]{jacob-wadsworth:div-alg-hensel-field}. 
In that case,
following the proof of \cite[Thm.~5.15(a)]{jacob-wadsworth:div-alg-hensel-field},
one derives $\exp D=\lcm(\exp\alpha,\exp\chi)$.
This expression is in general not determined by $E$;
see \cite[Ex.\ 5.13]{hanke:thesis} for an example.
} 
Translating Theorem \ref{thm:main} in terms of $\alpha$ and $\chi$ yields

\begin{cor}\label{cor:brussel}
Suppose $\delta$ satisfies \eqref{eq:delta}. 
Given a pair $(\alpha,\chi)\in\Br(\ovl F)\times\Hom_c(G,\Delta/\Gamma)$,
the division algebra $D$ underlying $\alpha+\delta(\chi)$ is a crossed product if and only if 
there is a Galois extension $M/\ovl F$ of
degree $\ind D$ that contains $\ovl F(\chi)$ and splits~$\alpha$.
\end{cor}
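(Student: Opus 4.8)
The plan is to transport Theorem \ref{thm:main} across the dictionary between Brauer-group data and residue data that was just assembled, so the only real work is bookkeeping. Fix $\delta$ satisfying \eqref{eq:delta}, fix a pair $(\alpha,\chi)$, put $[D]=\alpha+\delta(\chi)$ and let $D$ be the underlying division algebra, equipped with the Henselian valuation $v$ on $F$ (recall from the footnote after Theorem \ref{thm:main2} that passing to the Henselization of $D$ changes neither the residue algebra nor the crossed-product property, so we may assume $v$ is Henselian). By the facts cited before Proposition \ref{prop:ex}, $(D,v)$ is inertially split with $F=Z(D)$, its residue $\ovl D$ satisfies $[\ovl D]=\alpha^\chi=\resm{\ovl F}{\ovl F(\chi)}(\alpha)$, and by Theorem~5.6(b) of \cite{jacob-wadsworth:div-alg-hensel-field} the residue center is $Z(\ovl D)=\ovl F(\chi)$. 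The index formula of \cite[Thm.~5.15(a)]{jacob-wadsworth:div-alg-hensel-field} gives $\ind D=\ind\ovl D\cdot[\ovl F(\chi):\ovl F]$.

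Next I would apply Theorem \ref{thm:main}: since $v$ is Henselian, $D$ is a crossed product (i.e.\ contains a maximal subfield Galois over $F$) if and only if $\ovl D$ contains a maximal subfield Galois over $\ovl F$. So it remains to rephrase the latter condition in terms of $\alpha$ and $\chi$. A maximal subfield $M$ of $\ovl D$ has degree $[M:\ovl F]=[M:Z(\ovl D)]\cdot[Z(\ovl D):\ovl F]=\ind\ovl D\cdot[\ovl F(\chi):\ovl F]=\ind D$, and automatically $M\supseteq Z(\ovl D)=\ovl F(\chi)$. Conversely, any field $M$ with $\ovl F(\chi)\subseteq M$, $[M:\ovl F]=\ind D$, and $M$ splitting $\alpha$ has $[M:\ovl F(\chi)]=\ind\ovl D$ and, since $M\supseteq\ovl F(\chi)=Z(\ovl D)$ splits $\alpha$ over $\ovl F$, it splits $\alpha^\chi=[\ovl D]$ over $\ovl F(\chi)$; a field extension of $Z(\ovl D)$ of degree $\ind\ovl D$ that splits $\ovl D$ embeds into $\ovl D$ as a maximal subfield by the standard splitting-field criterion for central simple algebras. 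Thus ``$\ovl D$ contains a maximal subfield $M$ Galois over $\ovl F$'' is equivalent to ``there is a Galois extension $M/\ovl F$ of degree $\ind D$ containing $\ovl F(\chi)$ and splitting $\alpha$'', which is the assertion.

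The only mildly delicate point is the equivalence, for a Galois extension $M/\ovl F$ containing $Z(\ovl D)=\ovl F(\chi)$, between ``$M$ splits $\alpha$ (over $\ovl F$)'' and ``$M$ splits $\ovl D$ (over $\ovl F(\chi)$)''; this is immediate from $[\ovl D]=\resm{\ovl F}{\ovl F(\chi)}(\alpha)$ together with functoriality of restriction, $\resm{\ovl F(\chi)}{M}\circ\resm{\ovl F}{\ovl F(\chi)}=\resm{\ovl F}{M}$, so I expect no genuine obstacle, only care in handling the two directions and in invoking the correct embedding criterion for a maximal subfield of a division algebra.
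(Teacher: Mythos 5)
Your proposal is correct and follows essentially the same route as the paper: identify $[\ovl D]=\alpha^\chi$ and $Z(\ovl D)=\ovl F(\chi)$, use the index formula $\ind D=\ind\ovl D\cdot[\ovl F(\chi):\ovl F]$ to match ``maximal subfield of $\ovl D$'' with ``degree $\ind D$ over $\ovl F$'', and then invoke Theorem \ref{thm:main} (with $v$ Henselian, so both directions apply). The paper's proof is just a terser version of this same translation, and the extra details you supply (functoriality of restriction, the splitting-field embedding criterion) are exactly the standard facts it leaves implicit.
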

\begin{proof}
Let $M\supseteq \ovl F(\chi)$.
Since $\alpha^\chi=[\ovl D]$, $M$ splits $\alpha$ if and only if $M$ splits $\ovl D$.
Since $\ind D=\ind\ovl D\cdot[\ovl F(\chi):\ovl F]$,
$M$ is a maximal subfield of $\ovl D$ if and only if $[M:\ovl F]=\ind D$.
The assertion is thus equivalent to Theorem \ref{thm:main}.
\end{proof}

The existence of noncrossed products over $F$ can thus be shown by exhibiting pairs $(\alpha,\chi)$ for which no $M$ as in Corollary \ref{cor:brussel} exists.
This approach is original to \cite{brussel:noncr-prod} and has been followed in \cite{hanke-sonn:location}
(see remark b) in \S\ref{sec:rank1} below).

\section{Construction of inertially split division algebras}
Let $(F,v)$ be an arbitrary valued field.
Let $E$ be a finite-dimensional division algebra over $\ovl F$.
Assume $Z(E)=\ovl F(\chi)$ for some $\chi\in\Hom_c(G,\Delta/\Gamma)$ 
and $[E]\in\im(\resm{\ovl F}{Z(E)})$.
Proposition~\ref{prop:ex} states the existence of an $F$-central lift of $E$ (over any Henselian valued field $(F,v)$ with residue field $\ovl F$)
but does not give a direct construction of the lift.
Here, a construction is {\em direct} if we are not required to pass from some representative of the class $\alpha+\delta(\chi)$ to its underlying division algebra.
This section points out that if one is allowed to choose $F$
then a lift can be obtained by Tignol's construction  (\cite{tignol:gen-cr-prod}) of a {\em generic abelian extension} of $E$.
The construction is as follows,
similar to the generic abelian crossed products of  
Amitsur and Saltman in \cite{amitsur-saltman:gen-abel-cr-prod}.

Since $[E]$ lies in the image of $\resm{\ovl F}{Z(E)}$,
there is a central-simple $\ovl F$-algebra $A$ and an $\ovl F$-algebra embedding of $E$ into $A$
such that $E$ is the centralizer of $Z(E)$ in $A$.
Suppose $A$ as well as the embedding of $E$ are explicitly given,
and identify $E$ with its image in $A$.
Choose a minimal set of generators $(\sigma_1,\ldots,\sigma_r)$ 
of the abelian group $\Gal(Z(E)/\ovl F)$.
By the Skolem-Noether theorem, choose for each $1\leq i\leq r$ 
an element $z_i\in A^*$ 
so that $z_iaz_i^{-1}=\sigma_i(a)$ for all $a\in Z(E)$.
Let $E[x_1,\ldots,x_r;z]$ denote the twisted polynomial ring defined by the relations
\[  x_ia=(z_iaz_i^{-1})x_i,\quad x_ix_j=(z_iz_jz_i^{-1}z_j^{-1})x_jx_i, \] 
for all $a\in E$, $1\leq i,j\leq r$.
Note that $z_iaz_i^{-1}$ and $z_iz_jz_i^{-1}z_j^{-1}$ lie in $E$.
Let $E(x_1,\ldots,x_r;z)$ denote the ring of central quotients of $E[x_1,\ldots,x_r;z]$.
As in the proof of \cite[Thm.~2.3]{tignol:gen-cr-prod} one may verify 

\begin{thm}\label{thm:tignol}
$E(x_1,\ldots,x_r;z)$ is an $F$-central lift of $E$
where $F$ is isomorphic to the rational function field $\ovl F(t_1,\ldots,t_r)$.
\end{thm}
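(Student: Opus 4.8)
The plan is to verify the three assertions bundled into the statement: that $R := E(x_1,\ldots,x_r;z)$ is a division algebra finite-dimensional over a center $F$; that $F \cong \ovl F(t_1,\ldots,t_r)$ carries a valuation $v$ whose residue field is $\ovl F$ and for which $R$ is inertially split with $\ovl R \cong E$; and that $Z(R) = F$. The template for all of this is the proof of \cite[Thm.~2.3]{tignol:gen-cr-prod}, so the work is to check that Tignol's argument goes through verbatim when one allows $E$ to be noncentral over $\ovl F$ and uses the minimal generating set $(\sigma_1,\ldots,\sigma_r)$ of $\Gal(Z(E)/\ovl F)$ in place of a single cyclic generator. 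First I would set $F$ to be the subfield of $R$ generated over $\ovl F$ by the elements $t_i := x_i^{n_i}$ (where $n_i$ is the order of $\sigma_i$), suitably normalized; the defining relations $x_i a = \sigma_i(a) x_i$ and $x_i x_j = c_{ij} x_j x_i$ with $c_{ij} = z_i z_j z_i^{-1} z_j^{-1} \in E$ show that each $t_i$ is central, and a direct computation (as in Tignol) shows that $F$ is purely transcendental of transcendence degree $r$ over $\ovl F$ and that $R$ is free of rank $(\dim_{\ovl F} E)\cdot\prod n_i$ as an $F$-module once one passes to central quotients.

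Next I would equip $F = \ovl F(t_1,\ldots,t_r)$ with the $(t_1,\ldots,t_r)$-adic valuation $v$ — more precisely the valuation with value group $\Z^r$ ordered lexicographically (or any fixed total order making the $t_i$ independent), residue field $\ovl F$, and $v(t_i) = e_i$ the standard basis vectors. The key step is to exhibit the invariant valuation ring on $R$: one checks that the $E$-lattice spanned by the monomials $x_1^{a_1}\cdots x_r^{a_r}$ with $0 \le a_i < n_i$, scaled appropriately, is an invariant valuation ring $O_R$ with maximal ideal generated by the $x_i$ and residue ring $O_R/\mathfrak m_R \cong E$. This is exactly the content of the filtration argument in \cite[Thm.~2.3]{tignol:gen-cr-prod}; the noncentrality of $E$ over $\ovl F$ does not interfere because the conjugation action of the $x_i$ on $E$ restricts on $Z(E)$ to $\sigma_i$ and is by inner automorphisms of $A$, so it is "unramified" data. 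From $\ovl R \cong E$ one gets $[\ovl R : \ovl F] = \dim_{\ovl F} E$ and value group contribution $\prod n_i = [Z(E):\ovl F]$; comparing with $\dim_F R$ computed above forces equality in the fundamental inequality, so $R$ is defectless, hence a division algebra, and $v$ extends Schilling-style. Inertial splitness is then automatic from $\ovl R = E$ via Proposition~\ref{prop:ex}: indeed $Z(E) = \ovl F(\chi)$ and $[E] \in \im(\resm{\ovl F}{Z(E)})$ are exactly the hypotheses we assumed on $E$, and they are equivalent to $R$ being an $F$-central lift of $E$; alternatively one sees directly that $Z(E) = \ovl F(\chi)$ is the residue of the inertial extension cut out by the tame part, and the splitting field is unramified.

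The last point, $Z(R) = F$, is where I expect the only real subtlety: one must show that no element of $R$ outside $F$ is central. Writing a putative central element as a Laurent polynomial $\sum_{a} e_a \, x^a$ in the $x_i$ over $E$ (with $e_a \in E$), commutation with $Z(E) \subseteq E$ forces, for each multi-index $a$, that $e_a$ lies in the eigenspace where $\prod \sigma_i^{a_i}$ acts trivially on $Z(E)$, i.e. $a \equiv 0 \pmod{n_i}$ for all $i$ — because $(\sigma_1,\ldots,\sigma_r)$ is a \emph{minimal} generating set of $\Gal(Z(E)/\ovl F)$, so the homomorphism $\Z^r \to \Gal(Z(E)/\ovl F)$ has kernel exactly $\prod n_i \Z$ and there are no "extra" central monomials of intermediate degree. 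Then commutation with all of $E$ and with each $x_i$ pins down the surviving coefficients to lie in $\ovl F$, leaving only polynomials in the $t_i = x_i^{n_i}$. The minimality of the generating set is precisely what makes this computation come out right and is the reason it is imposed in the statement; this mirrors the nondegeneracy hypothesis in \cite{amitsur-saltman:gen-abel-cr-prod} and the corresponding step in \cite[Thm.~2.3]{tignol:gen-cr-prod}. Once $Z(R) = F$ is established, all the numerical identities of the previous paragraph are identities over the genuine center, and the proof is complete.
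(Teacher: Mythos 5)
The paper offers no proof of this theorem beyond the pointer ``as in the proof of \cite[Thm.~2.3]{tignol:gen-cr-prod} one may verify'', so your plan --- redo Tignol's computation of the center, the valuation, and the residue algebra --- is exactly the intended route, and much of your outline (the degree filtration on $E[x_1,\ldots,x_r;z]$, the identification $\ovl R\cong E$, the eigenspace argument showing that a central element is supported on the kernel of $\Z^r\to\Gal(Z(E)/\ovl F)$) is sound. But one central step is wrong as written. You claim that because $(\sigma_1,\ldots,\sigma_r)$ is a \emph{minimal} generating set, the kernel $\Lambda$ of $a\mapsto\prod_i\sigma_i^{a_i}$ equals $n_1\Z\times\cdots\times n_r\Z$ with $n_i$ the order of $\sigma_i$, so that the central monomials are exactly the normalized $x_i^{n_i}$ and $\prod_i n_i=[Z(E):\ovl F]$. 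Minimality does not give this: for $G=\Z/2\times\Z/4$ the set $\set{(1,1),(0,1)}$ is a minimal generating set with $n_1=n_2=4$, so $\prod n_i=16\neq 8=|G|$ and $\Lambda$ strictly contains $4\Z\times4\Z$ (it contains $(2,2)$). The computation needs the $\sigma_i$ to induce a direct product decomposition $G=\langle\sigma_1\rangle\times\cdots\times\langle\sigma_r\rangle$ (the actual setting of Amitsur--Saltman and Tignol), or else the $t_i$ must be taken from a $\Z$-basis of the true lattice $\Lambda\cong\Z^r$ rather than as normalizations of the $x_i^{n_i}$. The theorem survives ($\Lambda$ is still free of rank $r$, and the symmetric $2$-cocycle governing products of normalized central monomials over $\Lambda$ trivializes), but your degree count $\dim_F R=(\dim_{\ovl F}E)\prod n_i$ and the equality $\prod n_i=[Z(E):\ovl F]$ both fail in the non-split case and must be replaced by $[\Z^r:\Lambda]=|G|$; minimality of the generating set is there only to make $r$, hence the transcendence degree, minimal, not to make this computation work.

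The second gap is inertial splitness. It is not ``automatic from $\ovl R=E$ via Proposition~\ref{prop:ex}'': that proposition presupposes a Henselian base, whereas $\ovl F(t_1,\ldots,t_r)$ with the $(t_1,\ldots,t_r)$-adic valuation is not Henselian, and in any case it is an existence statement --- knowing that \emph{some} $F$-central lift of $E$ exists does not show that this particular $R$ is one. You must either exhibit an unramified splitting field of $R$ directly (e.g.\ $L_0\otimes_{\ovl F}F$ for a suitable finite separable splitting field $L_0\supseteq Z(E)$ of $A$), or Henselize and verify the Jacob--Wadsworth criterion (surjectivity of the fundamental homomorphism onto $\Gal(Z(\ovl R)/\ovl F)$ together with the index formula $\ind R=\ind\ovl R\cdot[Z(\ovl R):\ovl F]$); either works with the data at hand but requires an argument you have not supplied. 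A smaller slip: ``defectless, hence a division algebra'' has the causality backwards --- $R$ is a division algebra because $E[x_1,\ldots,x_r;z]$ is an iterated Ore extension of a division ring, hence a domain finite over its center; only then can one speak of the extension of $v$ and read off defectlessness from the equality in the fundamental inequality.
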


Theorem \ref{thm:tignol} appears in \cite{hanke:thesis} as Theorem 11.14.
Theorems \ref{thm:main} and \ref{thm:tignol} allow to construct explicit examples of noncrossed products by exhibiting division algebras $E$ as above that do not contain a maximal subfield Galois over $\ovl F$.
Here, an example is {\em explicit} if the structure constants of the noncrossed product are known.
This approach is original to \cite{hanke:thesis} and has been followed in \cite{hanke:expl-ex},\cite{hanke:laurent-noncr}.

\section{Rank $1$ valuations}\label{sec:rank1}

Assume $v$ has rank $1$.
We conclude with remarks and references about this important case.

\begin{enumerate}[a)]
\item
Every tame division algebra (in the sense of Jacob-Wadsworth \cite[\S6]{jacob-wadsworth:div-alg-hensel-field}) is inertially split.
This can be seen as follows: By \cite[Lem.~6.2]{jacob-wadsworth:div-alg-hensel-field}, any tame $D$ has a decomposition $D\sim S\otimes T$ where $S$ is inertially split and $T$ is tame and totally ramified.
In the rank $1$ case, $T$ must be trivial because there are no non-trivial tame and totally ramified division algebras.

\item
If $v$ is discrete (i.e.\ $\Gamma=\Z$) then \eqref{eq:seq} is split exact and the splitting homomorphism $\delta$ satisfies \eqref{eq:delta}.
Thus, one recovers Witt's theorem
\begin{equation}\label{eq:witt}
\Br(F_{nr}/F)\cong\Br(\ovl F)\oplus\Hom_c(G,\Q/\Z).
\end{equation}
Moreover, in Corollary \ref{cor:brussel},
$(\alpha,\chi)$ is the decomposition of $D$ according to \eqref{eq:witt}.
In this form, Corollary \ref{cor:brussel} is not new; it is original to Brussel \cite{brussel:noncr-prod} 
where it appears as ``Corollary'' on p.~381 (for $\charak \ovl F=0$).
It also appears in \cite[\S6]{hanke-sonn:location} as ``Brussel's Lemma''.
Note that $\Br(F_{nr}/F)=\Br(F)$ if $\ovl F$ is perfect.

\item
Replacing in \cite{hanke-sonn:location} the use of Corollary \ref{cor:brussel} by Theorem \ref{thm:main} leads to the following reformulation of the main result of \cite{hanke-sonn:location}:
Suppose $\ovl F$ is a global field and $v$ is discrete.
For any finite cyclic extension $K/\ovl F$ and any $m\in\N$
let $B_m(K)$ denote the set of all inertially split $F$-central division algebras with $Z(\ovl D)=K$ and $\ind\ovl D=m$.
The sets $B_m(K)$ form a partition of $\Br(F_{nr}/F)$. 
There is a formal product\footnote{called a {\em supernatural number} or {\em Steinitz number}} $b(K)=\prod_{p\in\P}p^{b_p}$ with $b_p\in\N\cup\set{\infty}$ such that
\[\textrm{
$B_m(K)$ consists entirely of crossed products if and only if $m|b(K)$.}\]
If $m\ndiv b(K)$ then $B_m(K)$ contains infinitely many noncrossed products.
The numbers $b(K)$ are computed in \cite{hanke-sonn:location} by a precise formula in terms of the number of roots of unity in $K$
and a measure for the embeddability of the cyclic extension $K/\ovl F$ into larger cyclic extensions.
For example, if $\ovl F=\Q$ then $b_2(\Q(\sqrt{-1}))=2$,
hence $B_2(\Q(\sqrt{-1}))$ and $B_4(\Q(\sqrt{-1}))$ consist entirely of crossed products while $B_8(\Q(\sqrt{-1}))$ contains infinitely many noncrossed products. 
All elements of $B_8(\Q(\sqrt{-1}))$ have index $16$.

\item
The result formulated in c) holds not only for discrete $v$.
In the non-discrete case, however, certain sets $B_m(K)$ can be empty.
In order to get a partition of $\Br(F_{nr}/F)$ we have to restrict ourselves to the cyclic extensions $K/\ovl F$ of the form $K=\ovl F(\chi)$ for some $\chi\in\Hom_c(G,\Delta/\Gamma)$.
\end{enumerate}

\def\cprime{$'$}

\end{document}